\author[1]{K. Dohmen} \affil[1]{Department of Mathematics\\ Mittweida
  University of Applied Sciences\\ Germany}
\author[2]{M. Trinks}
\affil[2]{Center for Combinatorics\\ Nankai University\\ Tianjin, China}
\title{An Abstraction of Whitney's Broken Circuit Theorem}
\newtheorem{theorem}{Theorem}
\theoremstyle{remark}
\newtheorem{remark}{Remark}
\DeclareMathOperator{\lcm}{lcm}
\DeclareMathOperator{\re}{Re}
\begin{document}

\maketitle

\begin{abstract}
We establish a broad generalization of Whitney's broken circuit theorem on the
chromatic polynomial of a graph to sums of the type $\sum_{A\subseteq S} f(A)$
where $S$ is a finite set and $f$ is a mapping from the power set of $S$ to an
abelian group. We give applications to the domination polynomial and the
subgraph component polynomial of a graph, the chromatic polynomial of a
hypergraph, the characteristic polynomial and Crapo's beta invariant of a
matroid, and the principle of inclusion-exclusion.  Thus, we discover several
known and new results in a concise and unified way.  As further applications
of our main result, we derive a new generalization of the maximums-minimums
identity and of a theorem due to Blass and Sagan on the M\"obius function of a
finite lattice, which generalizes Rota's crosscut theorem.  For the classical
M\"obius function, both Euler's totient function and its Dirichlet inverse,
and the reciprocal of the Riemann zeta function we obtain new expansions
involving the greatest common divisor resp.\ least common multiple.  We
finally establish an even broader generalization of Whitney's broken circuit
theorem in the context of convex geometries (antimatroids).

\footnotetext[1]{Electronic address:
  \url{dohmen@hs-mittweida.de}} \footnotetext[2]{Electronic address:
  \url{martin.trinks@googlemail.com}}
\par\medskip
\emph{Keywords.}  graph, hypergraph, matroid, chromatic polynomial, domination
polynomial, subgraph component polynomial, characteristic polynomial, beta
invariant, broken circuit, broken neighbourhood, inclusion-exclusion, M\"obius
function, lattice, maximum-minimums identity, totient, Dirichlet inverse,
Riemann zeta function, closure system, convex geometry
\par\medskip
\emph{Mathematics Subject Classification (2010).} 05A15, 05C30, 05C31,
06A07, 11A25, 52A01
\end{abstract}

%%%%%%%%%%%%%%%%%%%%%%%%%%%%%%%%%%%%%%%%%%%%%%%%%%%%%%%
\section{Introduction}

Whitney's broken circuit theorem \cite{Whitney:1932:LEM} is one of the most
significant results on the chromatic polynomial of a graph.  We refer to
Diestel \cite{Diestel:2010:GT} for general graph terminology, and to Dong, Koh
and Teo \cite{DKT:2005:CPCG} for a comprehensive treatment of the chromatic
polynomial.  The chromatic polynomial of any finite simple graph $G=(V,E)$ can
be expressed as
\begin{gather}
\label{Birkhoff-Whitney}
P(G,x) = \sum_{A\subseteq E} (-1)^{|A|} x^{c(V,A)}, 
\end{gather}
where $c(V,A)$ denotes the number of connected components of the spanning
subgraph $(V,A)$.  The significance of the chromatic polynomial lies in the
fact that for any $x\in\mathbb{N}$ it evaluates to the number of proper
$x$-colourings of $G$, that is, the number of mappings
$f:V\rightarrow\{1,\dots,x\}$ such that $f(v)\neq f(w)$ for any edge
$\{v,w\}\in E$.  This interpretation matches the original definition due to
Birkhoff \cite{Birkhoff:1912:DFN}, whereas the expansion in
Eq.~(\ref{Birkhoff-Whitney}) goes back to Whitney \cite{Whitney:1932:LEM}.
In this paper, we adopt Eq.~(\ref{Birkhoff-Whitney}) as a definition.

In order to state Whitney's broken circuit theorem, we assume that the edge
set of $G$ is endowed with a linear ordering relation. Given a set $C$
consisting of the edges of a cycle of $G$, we refer to $C\setminus\{\max C\}$
as a \emph{broken circuit} of $G$.  Thus, a broken circuit of $G$ is obtained
from the edge set of a cycle of $G$ by removing its maximum edge.

In his prominent result, Whitney \cite{Whitney:1932:LEM} showed that the sum
in Eq.~(\ref{Birkhoff-Whitney}) can be restricted to those subsets $A$ which
do not include any broken circuit as a subset; that is,
\begin{gather}
P(G,x) = \sum_{\substack{A\subseteq E\\ \forall
  B\in\mathscr{B}:\, B\not\subseteq A}} (-1)^{|A|} x^{c(V,A)} 
\end{gather}
where $\mathscr{B}$ denotes the set of broken circuits of $G$.

As a consequence, since $c(V,A) = |V|-|A|$ whenever $(V,A)$ is cycle-free, the
coefficient of $x^{|V|-k}$ in $P(G,x)$ equals $(-1)^k$ times the number of
$k$-subsets of the edge set of $G$ which do not include any broken circuit of
$G$ as a subset ($k=0,1,2,\dots$).

The significance of Whitney's broken circuit theorem lies in the fact that it
provides a combinatorial interpretation of the coefficients of the chromatic
polynomial. It has been generalized to hypergraphs
\cite{Dohmen:1995:BCT,Trinks:2012:NBCT}, matroids\cite{Heron:1972:MP},
lattices \cite{BS:1997,Rota:1964}, generalized graph colourings
\cite{Dohmen:2003:NTV}, and sophisticated inclusion-exclusion variants
\cite{Dohmen:1999:IIE}.  In this paper, an even broader generalization is
established, from which the aforementioned generalizations derive in a concise
and unified way. Some new results are deduced as well, among them a broken
circuit theorem for the recent subgraph component polynomial
\cite{AMT:2010,TAM:2011}, a generalization of the Blass-Sagan theorem on the
M\"obius function of a finite lattice \cite{BS:1997}, and a generalization of
the well-known maximum-minimums identity \cite{Ross:2012:FCP}.

The paper is organized as follows. Section \ref{sec:main} contains the main
result along with two different proofs.  This main result generalizes
Whitney's broken circuit theorem to sums of the type $\sum_{A\subseteq S}
f(A)$ where $S$ is a finite set and $f$ is a mapping from the power set of $S$
to an abelian group.  In Section \ref{sec:applications} conclusions are
drawn for the chromatic polynomial of a hypergraph, the subgraph component
polynomial and the domination polynomial of a graph, the characteristic
polynomial and beta invariant of a matroid, the maximums-minimums-identity,
the principle of inclusion-exclusion, the M\"obius function of a lattice, the
classical M\"obius function, Euler's totient function and its Dirichlet
inverse, and the reciprocal of the Riemann zeta function.  In Section
\ref{sec:furth-gener} our main result is even further generalized to convex
geometries (a concept equivalent to antimatroids).  Roughly speaking, this
generalization states that, if its requirements are fulfilled, the sum
$\sum_{A\subseteq S} f(S)$ can be restricted to the free sets of a convex
geometry on $S$.

%%%%%%%%%%%%%%%%%%%%%%%%%%%%%%%%%%%%%%%%%%%%%%%%%%%%%%%
\section{Main result}
\label{sec:main}

Our main result, which is stated below, specializes to Whitney's broken
circuit theorem for any finite simple graph $G$ by letting $S$ be the edge set
of $G$, $\mathscr{C}$ the set of all edge sets of cycles of $G$,
$\Gamma=\mathbb{Z}[x]$ with the usual addition of polynomials, $f:2^S
\rightarrow \Gamma$ defined by $f(A) = (-1)^{|A|} x^{c(V,A)}$ for any
$A\subseteq S$, and $\mathscr{B} = \{ C\setminus\{\max C\} \mathrel|
C\in\mathscr{C}\}$.

% Main result
\begin{theorem}
\label{thm:1}
Let $S$ be a finite linearly ordered set, $\mathscr{C}\subseteq
2^S\setminus\{\emptyset\}$, $\Gamma$ an abelian group (additively written),
and $f:2^S\rightarrow \Gamma$ a mapping such that for any $C\in\mathscr{C}$
and $A\supseteq C$,
\begin{gather}
\label{Bedingung}
 f(A) + f(A\setminus \{\max C\}) = 0 \, .
\end{gather}
Then, for any $\mathscr{B}\subseteq \{ C\setminus\{\max C\} \mathrel|
C\in\mathscr{C} \}$,
\begin{gather}
\label{claim}
 \sum_{A \subseteq S} f(A) \,\, = \!\! \sum_{\substack{A\subseteq S \\ \forall
  B\in\mathscr{B}:\, B\not\subseteq A}} f(A) \, .
\end{gather} 
\end{theorem}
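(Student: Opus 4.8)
The plan is to prove \eqref{claim} by a sign-reversing involution, feeding in the broken circuits of $\mathscr{B}$ one at a time. For each $B\in\mathscr{B}$ I fix a cycle-set $C_B\in\mathscr{C}$ with $B=C_B\setminus\{\max C_B\}$ and write $m_B:=\max C_B$; the feature I will exploit is that $m_B\notin B$ and $m_B>b$ for every $b\in B$. Writing $\mathscr{B}=\{B_1,\dots,B_r\}$ and $\mathscr{B}_i=\{B_1,\dots,B_i\}$, the right-hand sides associated with $\mathscr{B}_{i-1}$ and $\mathscr{B}_i$ differ only in the terms indexed by sets $A$ that avoid $B_1,\dots,B_{i-1}$ but contain $B_i$. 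Hence the whole statement follows, by telescoping from $\mathscr{B}_0=\emptyset$ to $\mathscr{B}_r=\mathscr{B}$, once I show for each $i$ that
\[
D_i \;:=\!\! \sum_{\substack{A\subseteq S,\ B_i\subseteq A\\ \forall j<i:\, B_j\not\subseteq A}} \!\! f(A) \;=\; 0 .
\]

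The cancellation is produced by the toggle $A\mapsto A\triangle\{m_i\}$, where $m_i:=m_{B_i}$. This map is fixed-point-free, and it is sign-reversing: if $m_i\in A$, then $A\supseteq B_i\cup\{m_i\}=C_{B_i}$, so \eqref{Bedingung} yields $f(A)+f(A\setminus\{m_i\})=0$; if $m_i\notin A$, then $A\cup\{m_i\}\supseteq C_{B_i}$ yields $f(A\cup\{m_i\})+f(A)=0$. In both cases $f(A)+f(A\triangle\{m_i\})=0$, so the paired terms annihilate and $D_i=0$ — provided the toggle actually maps the index set of $D_i$ to itself.

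The main obstacle is precisely this well-definedness. Containment of $B_i$ is preserved because $m_i\notin B_i$, so toggling $m_i$ is invisible to it. Preserving avoidance of $B_1,\dots,B_{i-1}$ is the delicate point: deleting $m_i$ only shrinks $A$ and creates no new containment, but \emph{adding} $m_i$ could a priori complete some $B_j$ with $j<i$. If $B_j\subseteq A\cup\{m_i\}$ while $B_j\not\subseteq A$, then necessarily $m_i\in B_j$, whence $m_i<m_j$ by the defining feature of $m_j$. To exclude this I order the broken circuits so that $m_{B_1}\le m_{B_2}\le\dots\le m_{B_r}$; then $j<i$ forces $m_j\le m_i$, contradicting $m_i<m_j$. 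Thus $m_i\notin B_j$ for every $j<i$, no new broken circuit is completed, and the involution is well defined. I expect that this ordering step — arranging $\mathscr{B}$ by the values $m_B$ and using that $m_B$ strictly dominates every element of $B$ — is the one place where the linear order on $S$ is genuinely needed, and it is the crux that turns the purely local cancellation from \eqref{Bedingung} into the global identity \eqref{claim}.
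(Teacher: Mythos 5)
Your proof is correct and is essentially the paper's second proof (induction on $|\mathscr{B}|$) in telescoped form: processing the broken circuits in increasing order of $m_B$ and killing each $D_i$ by toggling $m_i$ is exactly the paper's step of removing the broken circuit whose associated cycle has maximal $\max C$ and pairing $A$ with $A\setminus\{\max C\}$. Your ordering argument showing that the toggle cannot complete any earlier $B_j$ (if $m_i\in B_j$ then $m_i<m_j\le m_i$, a contradiction) coincides with the paper's maximality argument, so there is nothing further to check.
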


Subsequently, we give two proofs of Theorem \ref{thm:1}. The first proof makes
use of the principle of inclusion-exclusion, while the second proof is by
induction on $|\mathscr{B}|$.

% First proof
\begin{proof}[First Proof]
If $\emptyset\in\mathscr{B}$, then 
$C=\{c\}$ for some $C\in\mathscr{C}$ and $c\in S$, and hence,
by the requirement of the theorem,
$f(A) + f(A\setminus\{c\}) = 0$ for any $A\supseteq\{c\}$.
It follows that
\begin{gather*}
 \sum_{A\subseteq S} f(A) 
= \sum_{\substack{A\subseteq S\\ A\ni c}} f(A) + \sum_{\substack{A\subseteq
    S\\ A\not\ni c}} f(A)
= \sum_{\substack{A\subseteq S\\ A\ni c}} \left( f(A) + f(A\setminus\{c\})
\right) = 0 \, ,
\end{gather*}
which implies the validity of Eq.~(\ref{claim}) since no set $A$ satisfies
$\emptyset\not\subseteq A$.
\par
In the sequel, we assume that $\emptyset\notin \mathscr{B}$.  By the principle
of inclusion-exclusion,
\begin{align}
\sum_{\substack{A\subseteq S\\ \forall B\in\mathscr{B}:\,B\not\subseteq A}}\! f(A) 
& = \sum_{\mathscr{A}\subseteq\mathscr{B}} (-1)^{|\mathscr{A}|}
\sum_{\substack{A\subseteq S\\ A\supseteq\bigcup\mathscr{A}}} f(A) \notag \\
& = \sum_{A\subseteq S} f(A) + \bigg[
\sum_{\substack{\mathscr{A}\subseteq\mathscr{B} \\
    \mathscr{A}\neq\emptyset}} (-1)^{|\mathscr{A}|} \sum_{\substack{A\subseteq
S\\ A\supseteq \bigcup\mathscr{A}}} f(A) \bigg] \, . 
\label{bracketed}
\end{align} 
\newcommand{\X}{\bigcup \mathscr{A}} 
It remains to show that the bracketed term in Eq.~(\ref{bracketed}) 
vanishes, which is the case if
\begin{gather}
\label{Behauptung}
\sum_{A\supseteq \X}
f(A)=0 \qquad (\emptyset\neq \mathscr{A}\subseteq\mathscr{B}) .
\end{gather}
In order to establish Eq.~(\ref{Behauptung}), choose
(i) $B\in\mathscr{A}$ with $\max B = \max \X$, and 
(ii) $b\notin B$ such that $B\cup \{b\} \in \mathscr{C}$ and $b> \max B$.  
Then, $b\notin \X$ since otherwise $b\le \max \X = \max B$.  Hence,
\begin{gather}
\label{eq1}
 \sum_{A\supseteq \X} f(A) = \sum_{\substack{A\supseteq \X\\ A\ni b}} f(A) \,+ \sum_{\substack{A\supseteq \X\\
  A\not\ni b}} f(A) = \sum_{\substack{A\supseteq \X\\ A\not\ni b}} \left(
f(A\cup\{b\}) + f(A) \right) .
\end{gather}
For each $A$ in the last sum consider $A'=A\cup\{b\}$.  Since $A\supseteq
\X\supseteq B$ we have $A'\supseteq C$ for $C=B\cup\{b\}$ and hence, by the
requirement of the theorem,
\begin{gather}
\label{eq2}
 f(A\cup\{b\}) + f(A) = f(A') + f(A'\setminus\{ \max C\} ) = 0 \, .
\end{gather}
Now Eqs.~(\ref{eq1}) and (\ref{eq2}) imply Eq.~(\ref{Behauptung}), and hence
the statement of the theorem.
\end{proof}

% Second proof
\begin{proof}[Second Proof]
The statement is obvious if $\mathscr{B}=\emptyset$.  We proceed by induction
on $|\mathscr{B}|$.  If $\mathscr{B}\neq\emptyset$, then for some
$C\in \mathscr{C}$, $C\setminus \{\max C\}\in \mathscr{B}$.  Among those $C$
choose one whose $\max C$ value is maximal.  Let
$\mathscr{B}':=\mathscr{B}\setminus \{C\setminus \{\max C\} \}$.  By the
induction hypothesis,
\begin{gather*}
 \sum_{A \subseteq S} f(A) \,\, = \!\! \sum_{\substack{A\subseteq S \\ \forall
  B\in\mathscr{B}':\, B\not\subseteq A}} f(A) \, ,
\end{gather*} 
which implies
\begin{gather}
\label{afterinduction}
 \sum_{A \subseteq S} f(A) \,\, = \!\! \sum_{\substack{A\subseteq S \\ \forall
  B\in\mathscr{B}:\, B\not\subseteq A}} f(A) \,+
 \sum_{\substack{A\subseteq S \\ \forall
  B\in\mathscr{B}':\, B\not\subseteq A\\ C\setminus\{\max C\}\subseteq A}} f(A).
\end{gather}
We claim that the second sum on the right-hand side 
of Eq.~(\ref{afterinduction}) vanishes.
Let
\begin{gather*}
\mathscr{A} := \left\{ A \mathrel| A\subseteq S; \,\,\forall B\in\mathscr{B}': B\not\subseteq A; \,\,C\setminus\{\max C\} \subseteq A \right\} . %|
\end{gather*}
By Eq.~(\ref{Bedingung}) our claim is proved if $A\in\mathscr{A}$ if and only
if $A\setminus\{\max C\}\in\mathscr{A}$.  The only non-trivial issue is to
show that if $A\setminus\{\max C\}\in\mathscr{A}$, then $B\not\subseteq A$ for
any $B\in\mathscr{B}'$.  Assume that $B\subseteq A$ for some
$B\in\mathscr{B}'$.  By the requirement of the theorem there exists
$C'\in\mathscr{C}$ such that $B=C'\setminus\{\max C'\}$.  If $\max C \in B$,
then $\max C \le \max B < \max C'$, contradicting the maximality of $\max C$.
If $\max C\notin B$, then since $A\supseteq B$ we conclude that $A\setminus
\{\max C\}\supseteq B$, which is in contradiction with $A\setminus\{\max C\}
\in\mathscr{A}$.
\end{proof}

\begin{remark}
\label{thm:remark}
Let $S$ be a finite poset, $\mathscr{C} = \{\{s,t\} \subseteq S\mathrel|
s<t\}$, and $f$  a mapping satisfying the requirement in
Eq.~\eqref{Bedingung} with respect to some linear extension of $S$.  By
Theorem \ref{thm:1},
\begin{gather}
\label{eq:10}
\sum_{A\subseteq S} f(A) = \sum_{A\subseteq S_{\max}} f(A) 
\end{gather}
where $S_{\max}$ denotes the set of maximal elements in $S$.
\end{remark}

\begin{remark}
If $S$ is an upper semilattice, we may choose $\mathscr{C}=\{\{s,t,s\vee
t\}\mathrel| s||t\}$, where $s||t$ signifies that $s$ and $t$ are incomparable
and $s\vee t$ denotes the least upper bound of $s$ and $t$ in $S$. Thus,
for any mapping $f$ satisfying the requirement in Eq.~\eqref{Bedingung}
we have
\begin{gather}
\label{eq:11}
\sum_{A\subseteq S} f(A) = \sum_{\substack{A\subseteq S\\ \text{$A$
      chain}}} f(A) .
\end{gather}
\end{remark}

We will make use of the preceding two identities in Subsection
\ref{sec:arithmetic-functions}.

%%%%%%%%%%%%%%%%%%%%%%%%%%%%%%%%%%%%%%%%%%%%%%%%%%%%%%%
\section{Applications}
\label{sec:applications}

\subsection{Chromatic polynomial of a hypergraph}

A \emph{hypergraph} is a pair $H=(V,\mathscr{E})$ where $V$ is a set (of
\emph{vertices}) and $\mathscr{E}$ is a set of non-empty subsets of $V$
(called \emph{edges}).  $H$ is called \emph{finite} if $V$ is finite, and
\emph{simple} if $|E|\ge 2$ for any $E\in\mathscr{E}$.  Distinct vertices
$v,w\in V$ such that $v,w\in E$ for some $E\in\mathscr{E}$ are called
\emph{adjacent}. The reflexive and transitive closure of the adjacency
relation yields an equivalence relation on $V$, whose equivalence classes are
referred to as \emph{connected components} of $H$, and whose number of
equivalence classes is denoted by $c(H)$.

By applying the principle of inclusion-exclusion it follows that for any finite
simple hypergraph $H$ and any $x\in\mathbb{N}$ the polynomial
\begin{gather*}
P(H,x) = \sum_{A\subseteq \mathscr{E}} (-1)^{|A|} x^{c(V,A)}
\end{gather*}
evaluates to the number of mappings $f:V\rightarrow\{1,\dots,x\}$ such that
$f|_E$ (the restriction of $f$ to $E$) is non-constant for any $E\in
\mathscr{E}$ (see \cite{Dohmen:1995:BCT} for details).  

We consider cycles in hypergraphs in the classical sense of C. Berge
\cite{Berge:1973:GH}.  Accordingly, a \emph{cycle of length $l$} in a
hypergraph $H$ is any finite sequence
$(v_1,E_1,v_2,E_2,\dots,v_l,E_l,v_{l+1})$ consisting of at least two pairwise
distinct vertices $v_1,\dots,v_l\in V$ resp.\ edges $E_1,\dots,E_l\in
\mathscr{E}$ where $v_1 = v_{l+1}$ and $v_i,v_{i+1}\in E_i$ for
$i=1,\dots,l$. The definition of a broken circuit is similar as for graphs:
Given a linear ordering relation on $\mathscr{E}$, for any set $C$ consisting
of the edges of a cycle of $H$ we refer to $C\setminus\{\max C\}$ as a
\emph{broken circuit} of~$H$.

Let $H=(V,\mathscr{E})$ be a finite simple hypergraph whose edge set is
endowed with a linear ordering relation.  In order to apply Theorem
\ref{thm:1} we choose $S=\mathscr{E}$, $\mathscr{C}$ as a set of edge sets of
cycles of $H$, $\Gamma=\mathbb{Z}[x]$ with the usual addition of polynomials,
$f(A) = (-1)^{|A|} x^{c(V,A)}$ for any subsets $A\subseteq S$, and $\mathscr{B}
\subseteq \{C\setminus\{\max C\} \mathrel| C\in\mathscr{C} \} $.  This gives
\begin{gather}
\label{eq:7}
P(H,x) = \sum_{\substack{A\subseteq \mathscr{E}\\ \forall
  B\in\mathscr{B}:\, B\not\subseteq A}} (-1)^{|A|} x^{c(V,A)} 
\end{gather}
provided, of course, that the requirement in Eq.~(\ref{Bedingung}) is
satisfied for any $C\in\mathscr{C}$ and any $A\supseteq C$.  This can be
guaranteed by imposing one of the following requirements on $\mathscr{C}$:
\begin{enumerate}
\item[(a)] All cycles belonging to $\mathscr{C}$ have the property that
each edge of the cycle is included by the union of the other edges of that
cycle. 
\item[(b)] All cycles belonging to $\mathscr{C}$ contain an edge of
cardinality 2, and these edges constitute an upset of the edge
set with respect to the given linear ordering relation.
\end{enumerate}
When applied to (a), Theorem \ref{thm:1} from Section \ref{sec:main} provides
us with a new proof of \cite[Theorem 8]{Trinks:2012:NBCT}.  When applied to
(b), it leads to a new proof of \cite[Theorem 2]{Dohmen:1995:BCT}.

The requirement in (a) is satisfied if $\mathscr{C}$ arises from cycles in $H$
having the property that each edge on the cycle is included by the union of
its two neighbouring edges.  This latter condition holds, e.g., for
\emph{$l$-tight} cycles in $r$-uniform hypergraphs where $l\ge r/2$; these are
cycles ``whose vertices can be cyclically ordered in such a way that the edges
are segments of this ordering and every two consecutive edges intersect in
exactly $l$ vertices'' \cite{Glebov:2012:EHHC}.  Recall that a hypergraph is
referred to as \emph{$r$-uniform} if each edge contains exactly $r$
vertices. An $(r-1)$-tight cycle in an $r$-uniform hypergraph is called
\emph{tight}. Thus, choosing $\mathscr{C}$ from the tight cycles of an
$r$-uniform hypergraph satisfies (a).

As a more concrete example for (a), consider the 4-uniform hypergraph
$H=(V,\mathscr{E})$ on the set of lattice points of a finite rectangular grid
where the edges of $H$ are any four points determining a rectangle. Let
$\mathscr{C}$ be the set of all 3-sets of edges arising from the 2-tight
cycles of length three in $H$.  Since any 2-tight cycle of length three
corresponds to a pair of neighbouring rectangles (that is, rectangles having
two points in common, thus determining another, geometrically larger
rectangle) we can order the edges of $H$ in such a way that edges
corresponding to geometrically larger rectangles occur later in the ordering.
In this way, the sum in Eq.~\eqref{eq:7} can be restricted to those subsets
$A$ of $\mathscr{E}$ that contain no neighbouring rectangles.  Rectangle-free
grid colorings are a topic of active research; see e.g., \cite{Steinbach:2014}
for recent results.

\subsection{Subgraph component polynomial}
\label{sec:subgr-comp-polyn}

Introduced by Averbouch, Makowsky, and Tittmann \cite{AMT:2010,TAM:2011}, the
\emph{subgraph component polynomial} of any finite graph $G=(V,E)$ is defined
by
\begin{gather*} 
Q(G,x,y) = \sum_{A\subseteq V} x^{|A|} y^{c(G[A])}.
\end{gather*}
This polynomial has seen applications in social network analysis
\cite{AMT:2010} and formal language theory \cite{BBFH:2013}.  For some recent
results on $Q(G,x,y)$, the reader is referred to \cite{LH:2013:NSCP}.

In the following, our considerations are restricted to the particular case
where $x=-1$.  We refer to $G$ as \emph{cyclically claw-free} if no centre of
a claw is located on a cycle.  Evidently, any claw-free or cycle-free graph is
cyclically claw-free.

The key observation is that if $G$ is cyclically claw-free, then $c(G[A]) =
c(G[A\setminus \{c\}])$ for any $A\subseteq V$ and any vertex $c$ on a cycle
$C\subseteq A$ (where, in this subsection, we consider cycles as subsets of
the vertex set). This leads to a vertex analogue of the notion of a broken
circuit: Given a linear ordering relation on $V$, for any cycle $C\subseteq V$
we refer to $C\setminus \{\max C\}$ as a \emph{broken circuit} of $G$. Similar
to Eq.~\eqref{eq:7} we obtain by Theorem \ref{thm:1},
\begin{gather}
\label{eq:5}
Q(G,-1,y) = \sum_{\substack{A\subseteq V\\ \forall B\in\mathscr{B}: B\not\subseteq A}}
(-1)^{|A|} y^{c(G[A])}
\end{gather}
for any cyclically claw-free finite graph $G$ and any set
$\mathscr{B}\subseteq 2^V$ of broken circuits of $G$.  Note that if we choose
$\mathscr{B}$ as the set of \emph{all} broken circuits of $G$, then any subset
$A$ of $V$ in the preceding sum is cycle-free, and hence satisfies $c(G[A]) =
|A| - m(G[A])$. Thus, we obtain
\begin{gather}
\label{eq:6}
Q(G,-1,y) = \sum_{A} (-1)^{|A|} y^{|A|-m(G[A])}
\end{gather}
where the sum extends over all subsets $A$ of $V$ not including any broken
circuit. As a consequence, $Q(G,-1,-1)$ is the number of broken-circuit-free
vertex-induced subgraphs having an even number of edges minus those having an
odd number of edges.

We finally remark that Eqs.~\eqref{eq:5} and \eqref{eq:6} hold with $x$ in
place of $-1$ if $Q(G,x,y)$ is considered as a polynomial over some
commutative ring where $(x+1)y=0$.

\subsection{Domination polynomial}

The domination polynomial of any finite simple graph $G=(V,E)$, introduced by
Arocha and Llano \cite{Arocha-Llano:2000:MVM}, is the generating function
\begin{gather*}
D(G,x) := \sum_{k=0}^{|V|} d_k(G) x^k 
\end{gather*}
where $d_k(G)$ is the number of $k$-subsets $A$ of $V$ satisfying
$N_G[A]=V$. Here, $N_G[A]$ denotes the closed neighbourhood of $A$ in $G$,
that is, the union of $A$ and its set of neighbours in $G$.  For convenience,
we write $N_G[v]$ in place of $N_G[\{v\}]$ for any $v\in V$.

Given a linear ordering relation on the vertex set of $G$, for any $v\in
V$ we refer to $N_G[v]\setminus \{v\}$ as a \emph{broken neighbourhood} of $G$
if $ v = \max N_G[v]$.  In \cite{DT:2012:DR} it is shown that
\begin{gather}
\label{bnh}
 D(G,x) = \sum_{A\subseteq V} (-1)^{|A|} (x+1)^{|V|-|N_G[A]|} ,
\end{gather}
and moreover, \emph{if $G$ does not have isolated vertices, then this sum can be
  restricted to those subsets $A$ of $V$ which do not include any broken
  neighbourhoods from an arbitrary set of broken neighbourhoods of $G$.}
\par
This latter statement easily derives from our main result in Section
\ref{sec:main} and Eq.~(\ref{bnh}) by considering the mapping $f(A) =
(-1)^{|A|} (x+1)^{|V|-|N_G[A]|}$ for any $A\subseteq V$ and letting
$\mathscr{C}$ be the set of all closed neighbourhoods $N_G[v]$ where $v=\max
N_G[v]$.  The requirement in Eq.~(\ref{Bedingung}) is satisfied since
$N_G[A\setminus\{v\}] = N_G[A]$ for any $v\in V$ and any $A\supseteq N_G[v]$.

As noted in \cite{DT:2012:DR}, if $G$ does not have isolated vertices or
isolated edges, and its vertex set is linearly ordered such that the vertices
of degree 1 constitute an upset, then each pendant edge $\{v,w\}$ where $v$ is
of degree 1 gives rise to a broken neighbourhood $\{w\}$.  In this case, the
sum in Eq.~(\ref{bnh}) can be restricted to those subsets $A$ of $V$ which do
not contain any vertex from a set of vertices which are adjacent to a vertex
of degree 1.

\subsection{Characteristic polynomial and beta invariant}

Similar conclusions as for graph and hypergraph polynomials can be drawn for
the characteristic polynomial \cite{Heron:1972:MP} and the beta invariant
\cite{Crapo:1967:HIM} of a matroid.

Recall that a \emph{matroid} is a pair $M=(E,r)$ consisting of a finite set
$E$ and a $\mathbb{Z}$-valued function $r$ on $2^E$ such that
for any $A,B\subseteq E$, 
\begin{enumerate}[(i)]
\item $0\le r(A) \le |A|$, 
\item $A\subseteq B \Rightarrow r(A) \le r(B)$, 
\item $r(A\cup B) + r(A\cap B) \le r(A) + r(B)$.  
\end{enumerate}
A \emph{circuit} of $M$ is a non-empty subset $C\subseteq E$ such that
$r(C\setminus\{c\}) = |C|-1=r(C)$ for any $c\in C$.  Given a linear ordering
relation on $E$, for any circuit $C$ of $M$ we refer to $C\setminus\{\max C\}$
as a \emph{broken circuit} of~$M$.

The characteristic polynomial $\chi(M,x)$ and the beta invariant
$\beta(M)$ of a matroid $M=(E,r)$ are defined by
\begin{gather}
\chi(M,x) = \sum_{A\subseteq E} (-1)^{|A|} x^{r(E)-r(A)} \,
, \label{firstsum} \\
\beta(M) = (-1)^{r(E)} \sum_{A\subseteq E} (-1)^{|A|} r(A) \, . \label{secondsum}
\end{gather}
In order to apply Theorem \ref{thm:1}, let $f_1:2^E\rightarrow \mathbb{Z}[x]$
be defined by $f_1(A) = (-1)^{|A|} x^{r(E)-r(A)}$, and $f_2:2^E\rightarrow
\mathbb{Z}$ by $f_2(A) = (-1)^{|A|} r(A)$. Let $\mathscr{C}$ denote the set of
all circuits of $M$.  Then, for any $C\in\mathscr{C}$ and any $A\supseteq C$,
$r(A\setminus \{\max C\}) = r(A)$; hence, both $f_1$ and $f_2$ satisfy the
requirement in Eq.~(\ref{Bedingung}).  Let $\mathscr{B}$ denote the set of
broken circuits of $M$.  Then, by Theorem \ref{thm:1}, the sums in
Eqs.~(\ref{firstsum}) and (\ref{secondsum}) can be restricted to those subsets
$A$ of $E$ not including any $B\in\mathscr{B}$ as a subset.  No such $A$ may
include a circuit, since otherwise it would include the broken circuit derived
from it. Therefore, $r(A)=|A|$ and hence,
\begin{gather}
\chi (M,x) = \sum_{\substack{A\subseteq E\\ \forall
    B\in\mathscr{B}: B\not\subseteq A}} (-1)^{|A|} x^{r(E)-r(A)} 
                 \,=\,\sum_{k=0}^{|E|} (-1)^k b_k(M) x^{r(E)-k} \, , \label{Heron} \\
\beta (M)  = (-1)^{r(E)} \!\! \sum_{\substack{A\subseteq E\\ \forall
    B\in\mathscr{B}: B\not\subseteq A}} \!\! (-1)^{|A|} r(A) \,=\, (-1)^{r(E)} \sum_{k=1}^{|E|} (-1)^k k \, b_k(M) \label{NotHeron}
\end{gather}
where $b_k(M)$ denotes the number of $k$-subsets of $E$ including no broken
circuit.

Eq.~(\ref{NotHeron}) can alternatively be deduced from Eq.~(\ref{Heron}),
which is due to Heron \cite{Heron:1972:MP}, by considering the derivative of
$\chi(M,x)$ at $x=1$.

\subsection{Maximum-minimums identity}

Let $\Gamma$ be an abelian group, endowed with a linear ordering relation,
$(x_s|s\in S)$ a finite family of elements from $\Gamma$, $k\in\mathbb{N}$,
and $f:2^S\rightarrow \Gamma$ defined by
\begin{gather*}
 f(A) = \begin{cases} (-1)^{|A|-k} \min_k (x_a|a\in A) \, , & \text{if $|A|\ge k$},\\
                        0\, ,                 & \text{if $|A|<k$},
          \end{cases} 
\end{gather*}
where $\min_k (x_a|a\in A)$ denotes the $k$-th smallest element in $(x_a|a\in
A)$ for any $A\subseteq S$ satisfying $|A|\ge k$.

In order to define $\mathscr{B}$ and $\mathscr{C}$, choose some linear
ordering relation on $S$ such that $s<t$ implies $x_s\le x_t$ for any $s,t\in
S$.  Now, define $\mathscr{B} = \{ C\setminus \{\max C\} \mathrel|
C\in\mathscr{C} \}$ where $\mathscr{C}$ is the set of all ($k+1$)-subsets of
$S$. Evidently, for any $C\in\mathscr{C}$ and any $A\supseteq C$,
\begin{gather*}
\min\nolimits_k (x_a|a\in A\setminus\{\max C\}) = \min\nolimits_k (x_a|a\in
A) .
\end{gather*}
Hence, the requirements of Theorem \ref{thm:1} are satisfied, which gives
\begin{gather}
\label{eq:8}
 \sum_{\substack{A\subseteq S\\ |A|\ge k}} (-1)^{|A|-k} \min\nolimits_k
 (x_a|a\in A)  
= \! \sum_{\substack{A\subseteq S\\ |A|\ge k \\ \forall B\in\mathscr{B}: \,
    B\not\subseteq A}} \!  (-1)^{|A|-k} \min\nolimits_k (x_a|a\in A)   \, .
\end{gather}
Since $\mathscr{B}$ consists of all $k$-subsets of $S\setminus\{\max S\}$, the
last two conditions under the second sum in Eq.~\eqref{eq:8} are equivalent to
$|A|=k$ and $\max S\in A$.  Since there are ${|S|-1 \choose k-1}$ many such
$A$, and each of them satisfies $\min_k (x_a|a\in A) = \max (x_s | s\in S)$,
we find that
\begin{gather}
\label{eq:9}
 \sum_{\substack{A\subseteq S\\ |A|\ge k}} (-1)^{|A|-k} \min\nolimits_k
 (x_a|a\in A)
= {|S|-1 \choose k-1} \max (x_s|s\in S).
\end{gather}
Note that neither side of this identity depends on the ordering of $S$.  For
$k=1$ this identity is known as the \emph{maximum-minimums identity}; see
\cite{Ross:2012:FCP} for a probabilistic proof (in case that the $x_s$'s are
reals) and an application to the coupon collector problem.

\subsection{Principle of inclusion-exclusion}

Let $\{M_s\}_{s\in S}$ be a finite family of finite sets, where $S$ is
linearly ordered, $\mathscr{B}$ a set of non-empty subsets of $S$ such that
for any $B\in\mathscr{B}$, $\bigcap_{b\in B} M_b \subseteq M_c$ for some $c =
c(B) > \max B$.  In \cite{Dohmen:1999:IIE} it is shown that
\begin{gather}
\label{eq:4}
\left| \bigcup_{s\in S} M_s \right| \,=\, \sum_{\substack{\emptyset \neq A
    \subseteq S \\ \forall B\in\mathscr{B}: \,B\not\subseteq A}}
(-1)^{|A|-1} \left| \bigcap_{a\in A} M_a \right|. 
\end{gather}
Under the above assumptions, this identity (which has applications to network
and system reliability) follows from Theorem \ref{thm:1} by defining 
$\mathscr{C} = \{ B\cup c(B) \mathrel| B\in \mathscr{B}\}$
and 
\begin{gather*}
f(A) =
\begin{cases}
(-1)^{|A|-1} \left| \bigcap_{a\in A} M_a\right| , & \text{if
  $A\neq\emptyset$},\\
  0\, , & \text{if $A=\emptyset$},
\end{cases}
\end{gather*}
and applying the principle of inclusion-exclusion to the sets $M_s$,
$s\in S$.

A particular case of Eq.~\eqref{eq:4} is Narushima's principle of
inclusion-exclusion \cite{Nar:1974} where the sum extends over all chains of
a semilattice:
\begin{gather}
\label{eq:22}
\left| \bigcup_{s\in S} M_s \right| \,=\, \sum_{\substack{\emptyset \neq A
    \subseteq S \\ \text{$A$ chain}}}
(-1)^{|A|-1} \left| \bigcap_{a\in A} M_a \right|
\end{gather}
As a prerequisite, $(S,\vee)$ is required to be a finite upper semilattice
satisfying $M_s \cap M_t \subseteq M_{s\vee t}$ for any $s,t\in S$.  This
particular case of Eq.~\eqref{eq:4} may also be deduced from
Eq.~\eqref{eq:11}.

\subsection{M\"obius function of a lattice}
\label{sec:mobi-funct-latt}

Our next application concerns the M\"obius function of a finite lattice.  For
notions from the theory of partially ordered sets and lattices, we refer to
the textbook of Graetzer~\cite{Graetzer:1998:GLT}.

Recall that the \emph{M\"obius function} of any finite lattice $L = [\hat{0},
\hat{1}]$ is the unique $\mathbb{Z}$-valued function $\mu_L : L \rightarrow
\mathbb{Z}$ such that for any $x\in L$,
\begin{gather*}
\sum_{y\le x} \mu_L(y) = \delta_{\hat{0}x}
\end{gather*}
 where $\delta$ denotes the
Kronecker delta.  Following Rota \cite{Rota:1964}, we write $\mu(L)$ instead
of $\mu_L(\hat{1})$ and introduce the notion of a \emph{crosscut}, which is
any antichain $C\subseteq L\setminus \{\hat{0}, \hat{1}\}$ having a non-empty
intersection with any maximal chain from $\hat{0}$ to $\hat{1}$ in $L$.  As a
prerequisite, $L$ must be non-trivial, that is, $L\setminus \{\hat{0},
\hat{1}\}\neq\emptyset$.  Rota's crosscut theorem \cite{Rota:1964} states that for any
non-trivial finite lattice $L=[\hat{0},\hat{1}]$ and any crosscut $C$ of $L$,
\begin{gather}
\label{crosscut}
\mu(L) = \sum_{\substack{A\subseteq C\\ \bigwedge\! A=\hat{0}, \bigvee
    \! A=\hat{1}}} (-1)^{|A|}
\end{gather}
where $\bigwedge\emptyset = \hat{1}$ and $\bigvee \emptyset = \hat{0}$.
Due to Blass and Sagan \cite{BS:1997}, for $C=A(L)$, which is the crosscut of
all atoms of $L$, this sum can be written as%
\footnote{For $C=A(L)$ the conditions $\bigwedge A=\hat{0}$ in Eq.~(\ref{nbb})
and $\bigwedge B < c$ in Eq.~(\ref{condb}) can be omitted.}
\begin{gather}
\label{nbb}
\mu(L) = \sum_{\substack{A\subseteq C\\ \bigwedge\! A=\hat{0}, \bigvee
    \! A=\hat{1} \\ \forall B\in\mathscr{B}: \,B\not\subseteq A}} (-1)^{|A|}
\end{gather}
where, according to some fixed partial ordering relation $\trianglelefteq$ on
$C$, $\mathscr{B}$ consists of all non-empty subsets $B$ of $C$ such
that for any $b\in B$ there is some $c=c(B,b)\in C$ satisfying
\begin{gather}
\label{condb}
c \vartriangleleft b 
\quad \text{and}\quad
\bigwedge\! B < c < \bigvee\! B .
\end{gather}
Here and subsequently, $<$, $\wedge$ and $\vee$ are associated with the
lattice ordering $\le$ in $L$, while $\triangleleft$ is associated with the
additional partial ordering relation $\trianglelefteq$ on $C$. 

Blass and Sagan \cite{BS:1997} used their result in computing and
combinatorially explaining the M\"obius function of various lattices and in
generalizing Stanley's well-known theorem \cite{Stanley:1972} that the
characteristic polynomial of a semimodular supersolvable lattice factors over
the integers.  As noted by Blass and Sagan \cite{BS:1997}, for $C=A(L)$
Eq.~(\ref{nbb}) generalizes Eq.~(\ref{crosscut}), which is easily seen by
considering the total incomparability $\trianglelefteq$ order on $C$.
% In fact, Blass and Sagan
% \cite{BS:1997} did not employ Rota's crosscut theorem \cite{Rota:1964} in the
% proof of their result.

We now prove that Eq.~(\ref{nbb}) holds for \emph{any} crosscut $C$ of~$L$ by
applying our main result from Section \ref{sec:main} in dual form to the sum
in Eq.~(\ref{crosscut}).  Thus, we consider $f:2^C \rightarrow \mathbb{Z}$
where
\begin{align*}
f(A) & := \begin{cases} (-1)^{|A|} & \text{if $\bigwedge \! A = \hat{0}$ and
  $\bigvee \! A = \hat{1}$},\\
                        0 & \text{otherwise},
          \end{cases} 
\end{align*}
for any $A\subseteq C$.
According to some arbitrary linear extension of $\trianglelefteq$ on $C$ define
\begin{gather*}
 \mathscr{C} := \Big\{B\cup \big\{\!\min_{b\in B} c(B,b)\big\} \,\Big|\,
B\in\mathscr{B}\Big\}\, , 
\end{gather*}
which implies $\mathscr{B} = \{C^\prime\setminus \{\min C^\prime\}\mathrel|
C^\prime\in\mathscr{C}\}$.
It remains to check that for any
$C^\prime\in\mathscr{C}$ and any $A\supseteq C^\prime$ the requirement in
Eq.~(\ref{Bedingung}) holds.
To this end, we show that 
\begin{align}
\label{requirement}
\bigwedge A \,=\, \bigwedge \big( A\setminus \{\min C^\prime\} \big),
\quad 
\bigvee \! A \,=\, \bigvee \big( A\setminus \{\min C^\prime\} \big).
\end{align}
For the first identity in (\ref{requirement}), choose $B\in\mathscr{B}$ such
that $C^\prime = B \cup \{ \min_{b\in B} c(B,b) \}$. Then,
\begin{align*}
\bigwedge\! A & = \bigwedge (A\setminus C^\prime) \wedge \bigwedge C^\prime
\qquad \text{\textcolor{darkgray}{(since $A\supseteq C^\prime$)}}\\
& = \bigwedge \Big( A \setminus \big( B \cup \big\{ \! \min_{b\in B} c(B,b)
\big\} \big) \Big) \wedge \, \bigwedge \Big( B \cup \big\{ \! \min_{b\in B}
c(B,b) \big\}
\Big) \\
& = \bigwedge \Big( A \setminus \big( B \cup \big\{ \! \min_{b\in B} c(B,b)
\big\} \big) \Big) \wedge \, \bigwedge \! B \qquad \text{\textcolor{darkgray}{(since
  $\min_{b\in B} c(B,b)
  > \bigwedge B$)}} \\
& = \bigwedge \Big( A \setminus \big\{ \! \min_{b\in B} c(B,b) \big\} \Big)
\qquad \text{\textcolor{darkgray}{(since $A\supseteq B$ and $\min_{b\in B} c(B,b)\notin B$)}} \\
& = \bigwedge \Big( A \setminus \big\{ \! \min C^\prime \big\} \Big)
\qquad \text{\textcolor{darkgray}{(since $\min C^\prime = \min_{b\in B} c(B,b)$).}}
\end{align*}
For the second claim in Eq.~(\ref{requirement}), simply exchange $\wedge$
with $\vee$ and $>$ with $<$. Thus, for \emph{any} crosscut $C$ of~$L$, the
identity in Eq.~(\ref{nbb}) follows from our main result in Section
\ref{sec:main}.  Furthermore, the identity remains valid if $\mathscr{B}$ is
replaced by any subset $\mathscr{B}'\subseteq \mathscr{B}$.

% As an example, let $L$ be the lattice of positive divisors of $180 = 2^2\cdot
% 3^2\cdot 5^1$, and $C = \{4,6,9,10,15\}$.  If we define $6\vartriangleleft c$
% for any $c\in C\setminus\{6\}$, then $\mathscr{B} = \{ \{4,9\}, \{4,15\},
% \{9,10\} \}$. In this case, it turns out that the sum in Eq.~(\ref{nbb}) is
% empty and hence, $\mu(L)=0$. This is what the attentive reader might have
% expected since 180 is not squarefree.

\subsection{Arithmetical functions}
\label{sec:arithmetic-functions}

In this subsection, we establish new gcd- and lcm-sum expansions for some
classical arithmetical functions. We refer to the textbook of Apostol
\cite{Apostol:1976} for a comprehensive account of arithmetical functions in
general, and of multiplicative functions in particular.

\subsubsection{The classical M\"obius function}
\label{sec:class-mobi-funct}

For any $n\in\mathbb{N}$ let $L_n$ denote the lattice of positive divisors of
$n$, and $\mu(n)=\mu(L_n)$ the classical M\"obius function of $n$.  It is
well-known that for $n\ge 1$ and $k\ge 0$,
\begin{align}
\label{eq:18}
\mu(n) & = \begin{cases} (-1)^k, & \text{if $n$ is the product of $k$
  distinct primes,} \\  0,       & \text{otherwise}.
           \end{cases}
\end{align}                
We use $\gcd(A)$ and $\lcm(A)$ to denote the greatest common divisor
resp.\ least common multiple of any finite set $A\subseteq \mathbb{N}$.  We
adopt the convention that $\gcd(\emptyset)=0$ and \mbox{$\lcm(\emptyset)=1$}.
With $f(A)=(-1)^{|A|}$ if $\gcd(A)=1$ and $f(A)=0$ if $\gcd(A)>1$ for
$A\subseteq L_n\setminus\{1,n\}$ the first remark after Theorem \ref{thm:1}
implies that for any non-prime $n>1$,
\begin{gather}
\label{eq:12}
\sum_{\substack{A\subseteq L_n\setminus\{1,n\} \\ \gcd(A)=1}} (-1)^{|A|} =
\sum_{\substack{A\subseteq P_n^\ast \\ \gcd(A)=1}} (-1)^{|A|} =
\sum_{\substack{A\subseteq P_n\\ \gcd(A^\ast)=1}} (-1)^{|A|} =
\sum_{\substack{A\subseteq P_n \\ \lcm(A)=n}} (-1)^{|A|} = \mu(n)
\end{gather}
where $P_n$ denotes the set of prime factors of $n$, and $A^\ast =
\left\{\frac{n}{a}\mathrel| a\in A\right\}$ for any $A\subseteq P_n$.  The first
equality in Eq.~\eqref{eq:12} is due to Eq.~\eqref{eq:10}, while the last one
follows from Eq.~\eqref{eq:18} (or Eq.~\eqref{crosscut} with $C=P_n$).
Similarly, by considering the dual order on $L_n$ we obtain
\begin{gather}
\label{eq:19}
\sum_{\substack{A\subseteq L_n\setminus\{1,n\} \\ \lcm(A)=n}} (-1)^{|A|} =
\sum_{\substack{A\subseteq P_n \\ \lcm(A)=n}} (-1)^{|A|} =
\sum_{\substack{A\subseteq P_n\\ \gcd(A^\ast)=1}} (-1)^{|A|} =
\sum_{\substack{A\subseteq P_n^\ast \\ \gcd(A)=1}} (-1)^{|A|} = \mu(n)
\end{gather}
for each non-prime integer $n\ge 1$. For $n>1$, the requirement that $n$ is
non-prime is necessary in order to ensure that $P_n,P_n^\ast\subseteq
L_n\setminus\{1,n\}$ in Eqs.~\eqref{eq:12} and \eqref{eq:19}.  It can be
omitted by considering $L_n\setminus\{n\}$ in Eq.~\eqref{eq:12} and
$L_n\setminus\{1\}$ in Eq.~\eqref{eq:19}, respectively.

If $n$ is not squarefree, then $\mu(n)=0$ and hence, due to Eqs.~\eqref{eq:12}
and \eqref{eq:19}, the abstract simplicial complexes
\begin{align*}
\mathscr{S}_n & = \{A\subseteq L_n\setminus\{1,n\}\mathrel| \gcd(A)>1\}, \\
\mathscr{T}_n & = \{A\subseteq L_n\setminus\{1,n\}\mathrel|
\text{$A\neq\emptyset$ and $\lcm(A)<n$}\}
\end{align*}
have Euler characteristic~1.  Recall that the Euler characteristic
$\chi(\mathscr{A})$ of an abstract simplicial complex $\mathscr{A}$ is defined
as $\chi(\mathscr{A}) = \sum_{A\in\mathscr{A}} (-1)^{|A|-1}$, and that
$\mathscr{A}$ is called \emph{contractible} if its geometric realization as a
simplicial complex is contractible, which means, roughly speaking, that it can
be continuously shrunk to a point.  It is well-known that if $\mathscr{A}$ is
contractible, then $\chi(\mathscr{A})=1$.  In view of this, one might
conjecture that both $\mathscr{S}_n$ and $\mathscr{T}_n$ are contractible if
$n$ is not squarefree. This is indeed the case: Suppose $p^2\mathrel|n$ for
some prime~$p$.  For any $A\in\mathscr{S}_n$, if
$\gcd(A)\mathrel|\frac{n}{p}$, then $\gcd(A\cup\{\frac{n}{p}\}) = \gcd(A) >
1$; if $\gcd(A)\nmid \frac{n}{p}$, then $p^2\mathrel| \gcd(A)$ and hence,
$\gcd(A\cup\{\frac{n}{p}\})\ge p > 1$.  In both cases, $A\cup\{\frac{n}{p}\}
\in\mathscr{S}_n$. Thus, $\frac{n}{p}$ is contained in every maximal face of
$\mathscr{S}_n$. As a consequence, the geometric realization of
$\mathscr{S}_n$ is star-shaped with respect to $\frac{n}{p}$ and hence
contractible.  Similarly, by distinguishing the cases $p\mathrel|\lcm(A)$ and
$p\nmid\lcm(A)$ we may conclude that $\mathscr{T}_n$ is contractible.  In
fact, $\mathscr{S}_n$ and $\mathscr{T}_n$ are isomorphic by virtue of
$A\mapsto A^\ast$.

The preceding contractibility result establishes a link to the theory of
discrete tubes. Due to Corollary~2 of \cite{NW:1997:AT}, for any contractible
abstract simplicial complex $\mathscr{A}$,
\begin{equation}
(-1)^r \sum_{A\in\mathscr{A} \atop |A|\le r} (-1)^{|A|-1} \le (-1)^r \quad
(r=1,2,3,\dots).
\label{eq:21}
\end{equation}
Applying this to $\mathscr{A}=\mathscr{S}_n$ resp.\ $\mathscr{T}_n$ gives
Bonferroni-like inequalities on $\gcd$- and $\lcm$-sums, e.g., in
Eqs.~\eqref{eq:12} and \eqref{eq:19}, in the particular case where $n$ is
non-squarefree.

\subsubsection{Euler's totient function}
\label{sec:eulers-toti-funct-1}

Our conclusions on Euler's totient function and its Dirichlet inverse
(cf.~Subsection \ref{sec:dirichl-inverse-eule}) are stated more generally
using the notion of a multiplicative function.  We refer to any function
$h:\mathbb{N}\rightarrow \mathbb{C}$ as \emph{multiplicative} if $h(1)=1$ and
$h(ab) = h(a)h(b)$ for any coprime $a,b\in\mathbb{N}$, and as \emph{completely
  multiplicative} if the latter condition holds for any $a,b\in\mathbb{N}$.

Examples of multiplicative functions are the identity function, the power
functions for any complex exponent, and the Liouville function, which are all
completely multiplicative; further examples include the M\"obius function,
Euler's totient function, and the sum of positive divisors of $n$.  

Let $f(A)=(-1)^{|A|-1}h(\gcd(A))$ for $A\subseteq L_n\setminus\{1,n\}$ where
$h$ is multiplicative and non-vanishing on the set of primes.  Then, $h(n)\neq
0$ for $n\in\mathbb{N}$---provided $n$ is squarefree or $h$ is completely
multiplicative. In both cases, $h(d)\neq 0$ and $h(n/d) = h(n)/h(d)$ for any
positive divisor $d$ of $n$.  In view of this, the first remark following
Theorem \ref{thm:1} implies that for any non-prime integer $n\ge 1$,
\begin{multline}
\sum_{\substack{A\subseteq L_n\setminus\{1,n\}\\ A\neq\emptyset}} (-1)^{|A|-1}
h(\gcd(A)) = \sum_{\substack{A\subseteq P_n^\ast\\ A\neq\emptyset}}
(-1)^{|A|-1} h(\gcd(A)) = \sum_{\substack{A\subseteq P_n \\ A\neq\emptyset}}
(-1)^{|A|-1} h(\gcd(A^\ast))
\\
= \sum_{\substack{A\subseteq P_n \\ A\neq\emptyset}} (-1)^{|A|-1} h\left( n
\prod_{a\in A} \frac{1}{a} \right) = h(n) - h(n) \sum_{A\subseteq P_n}
(-1)^{|A|} \prod_{a\in A} \frac{1}{h(a)}
\label{eq:totient}
\end{multline}
and hence,
\begin{align}
\label{eq:33}
\sum_{\substack{A\subseteq L_n\setminus\{1,n\}\\ A\neq\emptyset}} (-1)^{|A|-1}
h(\gcd(A)) & = h(n) - h(n) \prod_{\substack{p|n \\ \text{$p$ prime}}} \left( 1
- \frac{1}{h(p)} \right)  =: h(n) - \varphi_h(n)
\end{align}
provided $n$ is squarefree or $h$ is completely multiplicative.  For
$h=\text{id}_{\mathbb{N}}$ the function $\varphi_h$ in Eq.~\eqref{eq:33} is
known as Euler's totient function, which for any $n\in\mathbb{N}$ evaluates to
the number of positive integers coprime with $n$ (sequence A00010 in
\cite{OEIS:2014}).

In a similar way to Subsection \ref{sec:class-mobi-funct}, the requirement
that $n$ is non-prime can be dropped by considering $L_n\setminus\{n\}$
instead of $L_n\setminus\{1,n\}$ in Eqs.~\eqref{eq:totient} and \eqref{eq:33}.
Furthermore, if $n$ is not squarefree and $h$ completely multiplicative, then
$\mu(n)=0$ and hence by Eq.~\eqref{eq:12} the sum in Eq.~\eqref{eq:33} (even
with the previous modification) can be restricted to $\gcd(A)>1$.

Since $L_n\setminus\{n\}$ is a lower semilattice for any $n\in\mathbb{N}$, we
obtain by Eqs.~\eqref{eq:11} and \eqref{eq:33},
\begin{align}
\label{eq:13}
h(n)-\varphi_h(n) & = \sum_{\substack{A\subseteq L_n\setminus\{n\}\\ \text{$A$
      chain} \\ A\neq\emptyset}} (-1)^{|A|-1} h(\gcd(A))
= \sum_{d\in L_n\setminus\{n\}} h(d)\! \sum_{\substack{A\subseteq L_n\setminus\{n\}\\ \text{$A$ chain}\\
    \gcd(A)=d}} (-1)^{|A|-1} .
\end{align}
By backward induction on the height of $d$ in $L_n\setminus\{n\}$ it can be
shown that the inner sum in Eq.~\eqref{eq:13} agrees with $-\mu(n/d)$. As a
consequence, 
\begin{align}
\label{eq:14}
\varphi_h(n) & = \sum_{d|n} h(d) \mu\left( \frac{n}{d} \right) = h(n) \sum_{d|n}
\frac{\mu(d)}{h(d)}
\end{align}
provided $n$ is squarefree or $h$ is completely multiplicative. 
Under this requirement, we rediscover the known formula (cf.~Subsection \ref{sec:dirichl-inverse-eule})
\begin{gather}
\label{eq:29}
\prod_{\substack{p|n \\ \text{$p$ prime}}} \left( 1 - \frac{1}{h(p)} \right) =
\sum_{d|n} \frac{\mu(d)}{h(d)} 
\end{gather}
as an immediate consequence of Eq.~\eqref{eq:14}.  Eq.~\eqref{eq:29} also
holds for non-squarefree numbers~$n$ and any multiplicative function $h$ if
$h$ is required to be nowhere zero, or if the sum in Eq.~\eqref{eq:29} is
restricted to $d|n$ where $d$ is squarefree. Both modifications immediately
follow by applying Eq.~\eqref{eq:29} to the squarefree kernel of $n$.

\subsubsection{Dirichlet inverse of Euler's totient function}
\label{sec:dirichl-inverse-eule}

The dual of Eq.~\eqref{eq:10}, applied to $f(A)=(-1)^{|A|}h(\lcm(A))$ for any
$A\subseteq L_n\setminus\{1,n\}$ where $h$ is multiplicative reveals that for
any non-prime integer $n\ge 1$,
\begin{gather}
\label{eq:15}
\sum_{A\subseteq L_n\setminus\{1,n\}} \!(-1)^{|A|} h(\lcm(A))
= \sum_{A\subseteq P_n} (-1)^{|A|} \prod_{a\in A} h(a) 
= \! \prod_{\substack{p|n \\ \text{$p$ prime}}} \! (1-h(p)) .
\end{gather}
For $h=\text{id}_{\mathbb{N}}$ the product on the right-hand side of
Eq.~\eqref{eq:15} is known as the \emph{Dirichlet inverse} of Euler's totient
function (sequence A023900 in \cite{OEIS:2014}).  Similar to our discussion on
the totient function, the requirement that $n$ is non-prime can be removed by
considering $L_n\setminus\{1\}$ instead of $L_n\setminus\{1,n\}$ in
Eq.~\eqref{eq:15}.  Furthermore, if $n$ is not squarefree, then $\mu(n)=0$ and
hence by Eq.~\eqref{eq:19}, the sum in Eq.~\eqref{eq:15} can be restricted to
$\lcm(A)<n$.  Since $L_n\setminus\{1\}$ is an upper semilattice for any
$n\in\mathbb{N}$, we obtain by Eqs.~\eqref{eq:11} and \eqref{eq:15},
\begin{align}
\label{eq:16}
\prod_{\substack{p|n\\ \text{$p$ prime}}} (1-h(p)) = \sum_{\substack{A\subseteq L_n\setminus\{1\} \\
    \text{$A$ chain}}} (-1)^{|A|} h(\lcm(A)) = 1 + \sum_{d\in
  L_n\setminus\{1\}} h(d) \!  \sum_{\substack{A\subseteq L_n\setminus\{1\} \\
    \text{$A$ chain} \\ \lcm(A)=d}} (-1)^{|A|} .
\end{align}
By induction on the height of $d$ in $L_n\setminus\{1\}$ it follows that the
inner sum in Eq.~\eqref{eq:16} agrees with $\mu(d)$. Thus, the following known
formula (cf.~Theorem 2.18 in \cite{Apostol:1976}) is obtained:
\begin{align}
\label{eq:17}
\prod_{\substack{p|n\\ \text{$p$ prime}}} (1-h(p)) = \sum_{d|n} h(d) \mu(d) .
\end{align}
Note that in Eqs.~\eqref{eq:15}--~\eqref{eq:17} we do not impose any further
requirement on $h$.  By applying Eq.~\eqref{eq:17} to $1/h$ where $h$ is
multiplicative and nowhere zero, we rediscover Eq.~\eqref{eq:29}.

\subsubsection{Riemann zeta function}

Closely related to $\varphi_h$ in Eq.~\eqref{eq:33} is the $\zeta$-function,
which can be represented as
\begin{align}
\label{eq:20}
\frac{1}{\zeta(s)} & = \lim_{n\rightarrow\infty} \frac{\varphi_h(n!)}{h(n!)},
\quad \re(s)>1,
\end{align}
where $h(n)=n^s$ for any $n\in\mathbb{N}$.  By
Eq.~\eqref{eq:33},
\begin{align}
\label{eq:35}
\frac{1}{\zeta(s)} & = 1 + \lim_{n\rightarrow \infty} \,\frac{1}{(n!)^s}
\! \sum_{A\subseteq L_{n!}\setminus\{1,n!\}} (-1)^{|A|}
\left( \gcd(A) \right)^s, \quad \re(s)>1.
\end{align}
In particular, for $s=2$,
\begin{align}
\label{eq:36}
\lim_{n\rightarrow \infty} \,\frac{1}{(n!)^2}
\! \sum_{A\subseteq L_{n!}\setminus\{1,n!\}} (-1)^{|A|-1}
\left( \gcd(A) \right)^2 = 1 - \frac{6}{\pi^2} .
\end{align}
Eqs.~\eqref{eq:20}--\eqref{eq:36} also hold if $n!$ is replaced by $n\#$
where $n\#$ denotes the primorial of $n$, that is, the product of all primes
less than or equal to $n$ (sequence A034386 in \cite{OEIS:2014}).

\section{Generalization to convex geometries}
\label{sec:furth-gener}

A \emph{closure system} $(S,h)$ consists of a set $S$ and a hull operator $h$
on $S$, i.e.\ an extensive, increasing, idempotent operator on subsets of~$S$.
A subset $A$ of $S$ is called \emph{$h$-closed} if $h(A)=A$, and
\emph{$h$-free} if all subsets of $A$ are $h$-closed.  An \emph{$h$-basis} of
$A$ is a minimal subset $B$ of $A$ such that $h(B)=A$.  A \emph{convex
  geometry} is a closure system $(S,h)$ where $S$ is finite and any $h$-closed
subset of $S$ has a unique $h$-basis \cite{EJ:1985:TCG}.

\begin{theorem}
\label{FurtherGen}
Let $(S,h)$ be a convex geometry, $\Gamma$ an abelian group (additively
written), and $f:2^S \rightarrow \Gamma$ such that for any $h$-closed, but not
$h$-free subset $A$ of $S$,
\begin{gather}
\label{eq:1}
\sum_{I:\,A_0\subseteq I\subseteq A} f(I) = 0\, ,
\end{gather}
where $A_0$ denotes the unique $h$-basis of $A$.  Then,
\begin{gather}
\label{eq:2}
\sum_{A\subseteq S} f(A) = \sum_{\substack{A\subseteq S\\ \text{$A$ $h$-free}}}
f(A) \, .
\end{gather}
\end{theorem}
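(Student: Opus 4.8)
The plan is to partition the index set of the left-hand side of Eq.~\eqref{eq:2} according to the closure operator $h$, grouping each $A\subseteq S$ together with all subsets sharing the same closure. Since $h$ is idempotent, every value $h(A)$ is $h$-closed, so the fibres of the map $A\mapsto h(A)$ partition $2^S$ and
\[
\sum_{A\subseteq S} f(A) \;=\; \sum_{\substack{K\subseteq S\\ \text{$K$ $h$-closed}}} \;\sum_{\substack{I\subseteq S\\ h(I)=K}} f(I).
\]
The whole statement then reduces to evaluating the inner fibre sum for each $h$-closed $K$.

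The key step I would carry out is to identify each fibre $\{I\subseteq S\mathrel| h(I)=K\}$ with the interval $[K_0,K]$, where $K_0$ denotes the unique $h$-basis of $K$. One inclusion is immediate: if $K_0\subseteq I\subseteq K$, then monotonicity of $h$ together with $h(K_0)=K$ and $h(K)=K$ yields $K\subseteq h(I)\subseteq K$, so $h(I)=K$. For the reverse inclusion I would invoke the convex-geometry hypothesis. If $h(I)=K$, then $I\subseteq h(I)=K$, and $I$ generates $K$, so it contains some minimal generating subset $B\subseteq I$ with $h(B)=K$; any strictly smaller generator would already lie in $I$, so $B$ is in fact a minimal generating subset of $K$, i.e.\ an $h$-basis of $K$. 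By the uniqueness of the basis this forces $B=K_0$, whence $K_0\subseteq I$.

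With this identification the two cases fall out cleanly. If $K$ is $h$-closed but not $h$-free, the inner sum is exactly $\sum_{K_0\subseteq I\subseteq K} f(I)$, which vanishes by the hypothesis in Eq.~\eqref{eq:1}. If $K$ is $h$-free, then every subset of $K$ is $h$-closed, so no proper subset of $K$ has closure $K$; thus $K_0=K$, the interval $[K_0,K]$ collapses to $\{K\}$, and the inner sum equals $f(K)$. Substituting these evaluations back into the partition leaves only the contributions of the $h$-free sets, which is precisely Eq.~\eqref{eq:2}.

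I expect the main obstacle to be the fibre-equals-interval identification, and in particular its reverse inclusion $K_0\subseteq I$. This is the sole place where the defining property of a convex geometry---uniqueness of the $h$-basis---is used, and it is what guarantees that the hypothesis Eq.~\eqref{eq:1}, phrased in terms of the interval above a basis, captures the entire fibre rather than only part of it. It plays the same structural role that ``a broken circuit lies below the edge at which its cycle was broken'' plays in Theorem \ref{thm:1}; once it is secured, the remainder is routine bookkeeping.
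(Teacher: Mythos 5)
Your proof is correct and follows essentially the same route as the paper's: partition $2^S$ into the fibres of $A\mapsto h(A)$, identify each fibre over an $h$-closed set $K$ with the interval $[K_0,K]$, kill the non-$h$-free fibres via Eq.~\eqref{eq:1}, and observe that $h$-free sets satisfy $K_0=K$. The only difference is that you spell out the proof of the fibre--interval identification (which the paper asserts without proof), and your argument for it is sound.
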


\begin{proof}
Since $(S,h)$ is a convex geometry, $h(I)=A$ if and only if $A_0\subseteq
I\subseteq A$. Hence,
\begin{gather*}
 \sum_{A\subseteq S} f(A) = \sum_{\substack{A\subseteq S \\ \text{$A$
      $h$-closed}}} \sum_{I:\,h(I)=A} f(I) = \sum_{\substack{A\subseteq S \\
    \text{$A$
      $h$-closed}}} \sum_{I:\,A_0\subseteq I\subseteq A} f(I)= \sum_{\substack{A\subseteq S \\
    \text{$A$ $h$-free}}} \sum_{I:\,A_0\subseteq I\subseteq A} f(I) \, .
\end{gather*}
Since any $A\subseteq S$ is $h$-free if and only if $A_0=A$,
the result follows.
\end{proof}

\begin{remark}
The requirement in Eq.~(\ref{eq:1}) is satisfied if $f(I) = (-1)^{|I|}
\gamma(h(I))$ for any $I\subseteq S$ where $\gamma : 2^S\rightarrow \Gamma$.
In this case, we obtain
\begin{gather}
\label{eq:3}
\sum_{A\subseteq S} (-1)^{|A|} \gamma(h(A)) = \sum_{\substack{A\subseteq S\\
    \text{$A$ $h$-free}}} (-1)^{|A|} \gamma(A) \, .
\end{gather}
In particular, by defining $\gamma(A) = (-1)^{|A|}$ for any $A\subseteq S$,
the sum $\sum_{A\subseteq S} (-1)^{|h(A)|-|A|}$ on the left-hand side of
Eq.~(\ref{eq:3}) evaluates to the number of $h$-free subsets of $S$, while by
defining $\gamma(A)=1$ for any $A\subseteq S$, Eq.~(\ref{eq:3}) reveals that
the Euler characteristic of the abstract simplicial complex of all non-empty
$h$-free subsets of $S$ is equal to 1, provided $S\neq\emptyset$. This latter result is attributed to Lawrence (unpublished,
cf.~\cite{EJ:1985:TCG}).
\end{remark}

\begin{remark}
The preceding theorem can be generalized even further by requiring that
$(S,h)$ is a closure system and replacing Eq.~(\ref{eq:1}) by
\begin{gather*}
\sum_{\emptyset \neq \mathscr{J} \subseteq \mathscr{A}_0}
(-1)^{|\mathscr{J}|-1} \! \sum_{I:\,\bigcup\!\!\mathscr{J}\subseteq I\subseteq
  A} \! f(I)
=0
\end{gather*}
where $\mathscr{A}_0$ denotes the system of all $h$-bases of $A$. Note that in
this more general setting, $A$ is $h$-free if and only if $\mathscr{A}_0 =
\{A\}$.
\end{remark}

In the following, we derive Theorem \ref{thm:1} from Theorem \ref{FurtherGen}.

\begin{proof}[Proof of Theorem~\ref{thm:1}]
The requirements imply that for any $B\in\mathscr{B}$ there is some $c(B)\in S
\setminus B$ such that $B\cup \{c(B)\} \in \mathscr{C}$ and $c(B)>b$ for any
$b\in B$.  For any $A\subseteq S$ define
\begin{align*}
\mathscr{B}|_A & :=\{B\in\mathscr{B}\mathrel| B\subseteq A\}, \\
h(A) & := A \cup \{ c(B)\mathrel| B\in\mathscr{B}|_A\}, \\
h^\ast(A) & := h(A) \cup h(h(A)) \cup \dots
\end{align*}
Then, $h^\ast$ is a hull operator on $S$, and 
\begin{align*}
A_0 & := A \setminus \{ c(B) \mathrel| B\in\mathscr{B}|_A\} 
\end{align*}
is the unique $h^\ast$-basis of any $h^\ast$-closed subset $A$ of $S$.

In order to verify Eq.~(\ref{eq:1}), let $A\subseteq S$ be $h^\ast$-closed,
but not $h^\ast$-free.  Then, $A_0\neq A$ and hence,
$\mathscr{B}|_A\neq\emptyset$.  Choose $B'\in\mathscr{B}|_A$ such that $c(B')
= \min \{c(B)\mathrel| B\in\mathscr{B}|_A\}$.  Since $B'\subseteq A$ and $A$
is $h^\ast$-closed, $c(B')\in A$ and therefore, $B'\cup \{c(B')\} \subseteq
A$.  We observe that $B'\subseteq A_0$, since otherwise $B' \cap
\{c(B)\mathrel| B\in\mathscr{B}|_A\}\neq \emptyset$, which implies $c(B)\le
\max B' < c(B')$ for some $B\in\mathscr{B}|_A$, contradicting the minimality
of $c(B')$. Now,
\begin{align*}
\sum_{I:A_0\subseteq I\subseteq A} f(I) & = \sum_{\substack{I:A_0\subseteq I
    \subseteq A \\ \max C\in I}} f(I) + \sum_{\substack{I:A_0\subseteq I
    \subseteq A \\ \max C\notin I}} f(I) 
= \sum_{\substack{I:A_0\subseteq I \subseteq A \\ \max C\in I}} (f(I) +
f(I\setminus \{\max C\})) 
\end{align*}
where $C:=B'\cup \{c(B')\}$.  Since any $I$ in the latter sum includes $C$,
Eq.~(\ref{Bedingung}) (with $I$ in place of $A$) reveals $f(I) + f(I\setminus
\{\max C\})=0$; hence, the whole sum vanishes as
required in Eq.~(\ref{eq:1}).  Applying Theorem \ref{FurtherGen} now gives a
sum over all $h^\ast$-free subsets of $S$.  Since any $A\subseteq S$ is
$h^\ast$-free if and only if $B\not\subseteq A$ for any $B\in\mathscr{B}$, the
proof is complete.
\end{proof}

%%%%%%%%%%%%%%%%%%%%%%%%%%%%%%%%%%%%%%%%%%%%%%%%%%%%%%%

\bibliography{main}{}
\bibliographystyle{plain}
%\printbibliography[title={References}, ]

\end{document}